%
%
%
\documentclass[reqno]{amsart}
\usepackage{amsthm}
\usepackage{enumerate}
\usepackage[centering]{geometry}
\usepackage{macro}

\newtheorem{thm}{Theorem}

\newtheorem{lemma}[thm]{Lemma}

\newcommand{\PCR}{Poincar\'e}
\newcommand{\tS}{\tilde{S}}
\newcommand{\ee}{\mathbf{e}}

\newcommand{\CH}{\Hess_{\CC}}
\newcommand{\RH}{\Hess}

\newcommand{\dH}{\tilde{H}}
\newcommand{\dS}{\tilde{S}}
\newcommand{\Herm}{\mathcal{H}}

\DeclareMathOperator{\Hess}{Hess}
\DeclareMathOperator{\dist}{dist}

\begin{document}
\title{Convexity of Hessian Integrals and Poincar\'e Type Inequalities}
\author{Zuoliang Hou}
\address{Mathematics Department, Columbia University, New York, NY 10027}
\email{hou@math.columbia.edu}
\date{\today}

\subjclass[2000]{Primary 32U99, 58G03; Secondary 35J65}
\keywords{Complex Hessian, Energy Functional}

\maketitle

\section{Introduction} 
Through out this paper, $k$ will be a fixed integer such that $0 \leq
k \leq n$.  For $\fl\in \RR^n$, let $S_k(\fl)$ be the normalized
$k^{\text{th}}$ elementary symmetric function of $\fl$, i.e. 
  $$ S_k(\fl_1,\cdots,\fl_n) = \frac{k!(n-k)!}{n!}
  \sum_{i_1<i_2<\cdots<i_k} \fl_{i_1}\cdots\fl_{i_k}. $$
Let $\Herm(n)$ denote the space of Hermitian matrices of size $n$.
For $A\in \Herm(n)$, denote $\fl(A)\in\RR^n$ the eigenvalue of $A$ and 
  $$ S_k(A) = S_k(\fl(A)).$$
Even though we use $S_k$ to denote functions defined on both $\RR^n$
and $\Herm(n)$, its meaning should be clear from the context. In
either case, we use $\dS_k$ to denote the complete polarization of
$S_k$. Let
  $$ \fG_k = \{ \fl \in \RR^n \,\mid\, S_j(\fl) \geq 0, j=1, \cdots, k \}$$
be the $k$-positive cone in $\RR^n$. 

Let $\fO$ be a smooth bounded
domain in $\CC^n$. For real valued function $u \in\cC^2(\fO)$, we
define the complex $k$-Hessian of $u$ by
  $$ H_k[u] = S_k \big(\CH(u) \big) $$
where $\CH(u)$ is the complex Hessian matrix of $u$. Let 
  $$ \cP_k(\fO)=\{ u \in \cC^2(\bar{\fO}) \,\mid\, \fl\big(\CH(u)\big)
  \in \fG_k \}$$ 
be the space of $k$-plurisubharmonic ($k$-psh) function, and
$\cP_k^0(\fO)$ be the subspace of $\cP_k(\fO)$ containing functions
vanishing on $\di\fO$. It is well known that for $v_j\in \cP_k(\fO),
j=1, \cdots, k$,
  $$ \dH_k[v_1,\cdots,v_k] = \dS_k\Big(\CH(v_1)\big), \cdots,
  \fl\big(\CH(v_k) \Big) \geq 0,$$
For 
$u, u_0,\cdots, u_k \in \cC^2(\bar{\fO})$, denote
  $$ F_k[u_0,\cdots, u_k] = \int_{\fO} (-u_0) \dH_k[u_1,\cdots,u_k] 
\qquad\text{and}\qquad I_k[u]=F_k[u,\cdots,u]=\int_{\fO} (-u) H_k[u].$$

\begin{thm} Notation as above
  \begin{enumerate}[(a)]
	\item 
	  For any $u_0,\cdots, u_k \in \cP^0_k(\fO)$, 
	  \begin{equation}
		F_k[u_0,\cdots,u_k] \leq \prod_{j=0}^k
		\big(I_k[u_j]\big)^{\frac{1}{k+1}}. 
		\label{eq:main}
	  \end{equation}
	\item For fixed $0 \leq m < k$, and fixed $v_1, \cdots, v_m
	  \in \cP_k(\fO)$, consider the functional
		$$ \cP_k^0(\fO) \ni u \mapsto I_{k,v_1,\cdots,v_m}[u] =
		\int_{\fO} (-u) \dH_k[u,\cdots, u, v_1, \cdots,v_m].$$
      Then $I_{k,v_1,\cdots,v_m}^{1/(k-m+1)}$ is convex on $\cP_0^k$ 
	  in the sense that for any $u, v \in \cP^0_k(\fO)$,
	  \begin{equation}
		I_{k,v_1,\cdots,v_m}^{\frac{1}{k-m+1}}[u+v] \leq
		I_{k,v_1,\cdots,v_m}^{\frac{1}{k-m+1}}[u] + 
		I_{k,v_1,\cdots,v_m}^{\frac{1}{k-m+1}}[v].
	  \end{equation}
	\end{enumerate}
  \label{thm:main}
\end{thm}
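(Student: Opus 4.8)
The plan is to deduce both parts from one Cauchy--Schwarz-type inequality for a positive semi-definite bilinear form hidden inside $F_k$.  First I would record three standard facts.  (1) There is a positive constant $c_{n,k}$ with $\dH_k[w_1,\dots,w_k]\,dV=c_{n,k}\,dd^cw_1\wedge\cdots\wedge dd^cw_k\wedge\omega^{\,n-k}$, where $\omega=i\partial\bar\partial|z|^{2}$; hence
$$ F_k[w_0,\dots,w_k]\;=\;c_{n,k}\!\int_\fO(-w_0)\,dd^cw_1\wedge\cdots\wedge dd^cw_k\wedge\omega^{\,n-k}\;=\;c_{n,k}\!\int_\fO dw_0\wedge d^cw_1\wedge dd^cw_2\wedge\cdots\wedge dd^cw_k\wedge\omega^{\,n-k}, $$
the last equality when $w_0|_{\di\fO}=0$ (integration by parts); likewise, repeated integration by parts (everything is $\cC^2$) shows $F_k$ is invariant under permutations of those of its arguments that vanish on $\di\fO$.  (2) For $v_1,\dots,v_{k-1}\in\cP_k(\fO)$ the $(n-1,n-1)$-form $dd^cv_1\wedge\cdots\wedge dd^cv_{k-1}\wedge\omega^{\,n-k}$ is semi-positive — the form-level sharpening of $\dH_k\geq0$ for $k$-psh arguments, coming from G\aa rding's theory of the cone $\fG_k$.  (3) A $k$-psh function vanishing on $\di\fO$ is $\leq 0$ (it is subharmonic), so $I_{k,v_1,\dots,v_m}[u]\geq0$ and, more generally, every mixed energy $F_k[\cdot,\dots,\cdot]$ with all arguments in $\cP^0_k(\fO)$ is non-negative.

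Next I would prove the crucial lemma: for $\psi_0,\psi_1\in\cC^2(\bar\fO)$ vanishing on $\di\fO$ and $w_1,\dots,w_{k-1}\in\cP_k(\fO)$,
$$ F_k[\psi_0,\psi_1,w_1,\dots,w_{k-1}]^{2}\;\leq\;F_k[\psi_0,\psi_0,w_1,\dots,w_{k-1}]\cdot F_k[\psi_1,\psi_1,w_1,\dots,w_{k-1}]. $$
Set $T=c_{n,k}\,dd^cw_1\wedge\cdots\wedge dd^cw_{k-1}\wedge\omega^{\,n-k}$ (a closed semi-positive $(n-1,n-1)$-form) and, on $\{\psi\in\cC^2(\bar\fO):\psi|_{\di\fO}=0\}$, consider $B(\psi,\chi):=\int_\fO d\psi\wedge d^c\chi\wedge T$.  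This $B$ is symmetric, since $d\psi\wedge d^c\chi-d\chi\wedge d^c\psi$ has bidegree $(2,0)+(0,2)$ and is annihilated by $\wedge\,T$; and $B(\psi,\psi)=\int_\fO i\partial\psi\wedge\bar\partial\psi\wedge T\geq0$, being the wedge of a positive $(1,1)$-form with a semi-positive $(n-1,n-1)$-form.  By fact (1), $F_k[\psi_0,\psi_1,w_1,\dots,w_{k-1}]=B(\psi_0,\psi_1)$, so the lemma is exactly the Cauchy--Schwarz inequality for the positive semi-definite form $B$.

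Part (b) then follows quickly.  Fix $v_1,\dots,v_m\in\cP_k(\fO)$, put $p=k-m+1\ (\geq2)$, and for $u,v\in\cP^0_k(\fO)$ set $c_j=F_k[u^{\otimes j},v^{\otimes(p-j)},v_1,\dots,v_m]$ ($j$ copies of $u$, then $p-j$ copies of $v$), so $c_0=I_{k,v_1,\dots,v_m}[v]$ and $c_p=I_{k,v_1,\dots,v_m}[u]$.  The lemma applied with $(\psi_0,\psi_1)=(u,v)$ and remaining arguments $u^{\otimes(j-1)},v^{\otimes(p-1-j)},v_1,\dots,v_m$ gives $c_j^{2}\leq c_{j-1}c_{j+1}$, so $(c_j)_{0\leq j\leq p}$ is log-convex and $c_j\leq c_0^{\,1-j/p}c_p^{\,j/p}$.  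Since $\cP^0_k(\fO)$ is a convex cone, $u+v\in\cP^0_k(\fO)$; expanding by multilinearity of $F_k$ in its first $p$ slots,
$$ I_{k,v_1,\dots,v_m}[u+v]\;=\;\sum_{j=0}^{p}\binom{p}{j}c_j\;\leq\;\sum_{j=0}^{p}\binom{p}{j}\bigl(c_p^{1/p}\bigr)^{j}\bigl(c_0^{1/p}\bigr)^{p-j}\;=\;\bigl(c_p^{1/p}+c_0^{1/p}\bigr)^{p}, $$
and the $p$-th root of this is the assertion; the cases $c_0=0$ or $c_p=0$ are immediate from log-convexity.

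For part (a) I would iterate the lemma just as in the classical proof of the Alexandrov--Fenchel / mixed-volume inequalities, the difference being that the quadratic form here is \emph{positive}, so the iteration yields the forward H\"older inequality \eqref{eq:main} rather than its reverse: freezing any $k-1$ arguments of $F_k$ and varying the remaining two, the computation of (b) shows that the mixed energies along that ``two-function slice'' are log-convex, so any mixed energy in which two functions both occur is controlled by the geometric mean of the two in which one of them is replaced by an extra copy of the other; iterating until $F_k[u_0,\dots,u_k]$ is completely unmixed produces $\prod_{j=0}^{k}I_k[u_j]^{1/(k+1)}$.  All ``frozen'' arguments that arise stay in $\cP^0_k(\fO)\subset\cP_k(\fO)$, so the lemma always applies, and only the combinatorial bookkeeping remains.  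The real obstacle in all of this is the crucial lemma — effectively the semi-positivity of the mixed complex Hessian $(n-1,n-1)$-form for $k$-plurisubharmonic arguments, which is where G\aa rding's cone theory genuinely enters; once that is in hand, (b) is a one-line log-convexity estimate and (a) a routine iteration.
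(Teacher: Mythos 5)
Your proof is correct and follows essentially the same route as the paper: the heart of both arguments is the Cauchy--Schwarz inequality for the positive semi-definite symmetric bilinear form obtained by freezing $k-1$ arguments of $F_k$ (the paper proves it via the non-negative quadratic $t\mapsto F_k[u_0+tu_1,u_0+tu_1,v_2,\dots,v_k]$, which is the same computation as your form $B$), together with the symmetry of $F_k$ from integration by parts. The remaining algebra --- the iterated H\"older inequality for (a) and the subadditivity of the $(k-m+1)$-st root for (b) --- is exactly the standard lemma the paper cites from H\"ormander, which you reprove explicitly for (b) and correctly outline for (a).
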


A smooth bounded domain $\fO$ is strong $k$-pseudoconvex, if $\di\fO$
is connected, and the $(k-1)^{\text{th}}$ elementary function of the
eigenvalues of the Levi form of $\di\fO$ is strictly positive on
$\di\fO$. For strong $k$-pseudoconvex domain, we have following
\PCR{} type inequalities.

\begin{thm} 
  Suppose $\fO$ is strong $k$-pseudoconvex, then for $0\leq m < k$,
  there exists constant $C$ such that for any
  $u\in \cP^0_k(\fO)$
  \begin{equation}
	\Big(I_m[u]\Big)^{\frac{1}{m+1}} \leq C \Big(I_k[u]\Big)^{\frac{1}{k+1}},
	\label{eq:cp}
  \end{equation}
  where the constant $C$ depends on $m$, $k$ and $\fO$ only.
  \label{thm:CPC}
\end{thm}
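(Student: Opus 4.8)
The idea is to sandwich $I_m[u]$ between a mixed functional $F_k$ with a fixed ``barrier'' and then quote Theorem~\ref{thm:main}(a). Write $\phi(z)=|z|^2$, so that $\CH(\phi)=I$. The starting point is the purely algebraic identity
\[
  \dS_k(\underbrace{A,\dots,A}_{m},\underbrace{I,\dots,I}_{k-m})=S_m(A),
\]
obtained by expanding $S_k(tA+I)=\sum_{j=0}^{k}\binom{k}{j}t^{j}S_j(A)$ and $S_k(tA+I)=\dS_k\big((tA+I)^{\otimes k}\big)=\sum_{j=0}^{k}\binom{k}{j}t^{j}\,\dS_k(\underbrace{A,\dots,A}_{j},\underbrace{I,\dots,I}_{k-j})$ and matching the coefficient of $t^{m}$. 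Hence $H_m[u]=\dH_k[\underbrace{u,\dots,u}_{m},\underbrace{\phi,\dots,\phi}_{k-m}]$, and therefore
\[
  I_m[u]=\int_{\fO}(-u)\,\dH_k[\underbrace{u,\dots,u}_{m},\underbrace{\phi,\dots,\phi}_{k-m}]
        =F_k[\underbrace{u,\dots,u}_{m+1},\underbrace{\phi,\dots,\phi}_{k-m}].
\]

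Using strong $k$-pseudoconvexity I would fix a barrier $\psi\in\cP_k^0(\fO)\cap\cC^2(\bar{\fO})$ with $H_k[\psi]\ge c_0>0$ on $\bar{\fO}$ — for instance the solution of the Dirichlet problem $H_k[\psi]=1,\ \psi|_{\di\fO}=0$, or, more concretely, an explicit subsolution built from a defining function whose Levi form has positive $(k-1)^{\text{th}}$ elementary symmetric function. Since $\psi\in\cP_k^0(\fO)$ we have $\fl\big(\CH(\psi)\big)\in\fG_k$, so by the Maclaurin inequalities on the positive cone $S_j\big(\CH(\psi)\big)\ge S_k\big(\CH(\psi)\big)^{j/k}\ge c_0^{j/k}>0$ for $1\le j\le k$, while $M:=\|\CH(\psi)\|_{\cC^0(\bar{\fO})}<\infty$. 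The crucial ingredient is then the pointwise estimate
\[
  S_m(A)\ \le\ C_1\,\dS_k(\underbrace{A,\dots,A}_{m},\underbrace{B,\dots,B}_{k-m}),
\]
valid for every Hermitian matrix $A$ with $\fl(A)\in\fG_k$ and every Hermitian $B$ with $S_1(B),\dots,S_k(B)>0$, $S_k(B)\ge c_0$ and $\|B\|\le M$, where $C_1=C_1(c_0,M,n,k)$. Taking $B=\CH(\psi)(z)$, multiplying by $-u\ge 0$ and integrating gives $I_m[u]\le C_1\,F_k[\underbrace{u,\dots,u}_{m+1},\underbrace{\psi,\dots,\psi}_{k-m}]$; Theorem~\ref{thm:main}(a) applied to the tuple consisting of $m+1$ copies of $u$ and $k-m$ copies of $\psi$ then bounds the right side by $\big(I_k[u]\big)^{\frac{m+1}{k+1}}\big(I_k[\psi]\big)^{\frac{k-m}{k+1}}$, and raising the resulting inequality $I_m[u]\le C_1\big(I_k[\psi]\big)^{\frac{k-m}{k+1}}\big(I_k[u]\big)^{\frac{m+1}{k+1}}$ to the power $1/(m+1)$ yields \eqref{eq:cp} with $C=\big(C_1\,(I_k[\psi])^{\frac{k-m}{k+1}}\big)^{\frac1{m+1}}$, depending only on $m$, $k$ and $\fO$.

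The main obstacle is precisely this pointwise estimate. It is delicate because $\psi$ need not be (and, for $k<n$, typically cannot be chosen) strictly plurisubharmonic, so one cannot simply compare $\CH(\psi)$ with a multiple of $I$; one has to exploit that $\fl\big(\CH(\psi)\big)$ lies in the \emph{open} cone $\{S_1,\dots,S_k>0\}$ together with the a priori bounds $S_k\big(\CH(\psi)\big)\ge c_0$ and $\|\CH(\psi)\|\le M$. Moreover $\{\fl(A)\in\fG_k\}$ is non-compact while both sides of the estimate are homogeneous of degree $m$ in $A$, so a bare compactness argument fails. I would handle this via G{\aa}rding's theory of the hyperbolic polynomial $S_k$: when $\fl(B)$ is in the open cone, $tA+B$ stays in it for $t\ge 0$, and $t\mapsto S_k(tA+B)$ is real-rooted with nonnegative coefficients, from which $\dS_k(\underbrace{A,\dots,A}_{m},\underbrace{B,\dots,B}_{k-m})$ inherits strict positivity for $A\in\fG_k$ with $S_m(A)>0$, with the quantitative dependence on $c_0$ and $M$; a separate argument along sequences $A_j\in\fG_k$ with $\|A_j\|\to\infty$ (using that $A_j/\|A_j\|$ subconverges within $\fG_k$, forcing $\dS_k(\underbrace{A_j}_m,\underbrace{B}_{k-m})/S_m(A_j)\to\infty$) controls the quotient near infinity. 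A possible alternative is to telescope through the consecutive estimates $(I_j[u])^{1/(j+1)}\le C\,(I_{j+1}[u])^{1/(j+2)}$ for $m\le j<k$, which reduces the pointwise input to the form $S_j(A)\le C\,\dS_{j+1}(\underbrace{A,\dots,A}_{j},B)$, i.e.\ to the positivity of the (elliptic) linearized operator $S_{j+1}^{pq}(A)$ paired against a positive-cone element.
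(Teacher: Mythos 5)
Your overall strategy coincides with the paper's: fix the barrier $v\in\cP^0_k(\fO)$ solving $H_k[v]=1$, establish a pointwise inequality $S_m(A)\le C\,\dS_k(A,\dots,A,B,\dots,B)$ with $B=\CH(v)$, integrate against $-u\ge 0$, and conclude with Theorem~\ref{thm:main}(a). The identity $\dS_k(A,\dots,A,I,\dots,I)=S_m(A)$ and the final bookkeeping are correct and identical to the paper's.

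The gap is exactly where you locate the difficulty: the pointwise estimate is asserted but not proved, and the compactness argument you sketch does not close. By the equal homogeneity (degree $m$ in $A$) of both sides you may normalize $\|A\|=1$, so the separate discussion of sequences with $\|A_j\|\to\infty$ is superfluous (and the displayed quotient there points the wrong way). What remains is to bound $S_m(A)/\dS_k(A,\dots,A,B,\dots,B)$ on the compact set $\{A\in\fG_k,\ \|A\|=1\}\times K$; but $\dS_k(A,\dots,A,B,\dots,B)$ genuinely vanishes at boundary points of the cone (e.g.\ $A\succeq 0$ of rank $<m$), where $S_m(A)$ vanishes as well, and continuity gives no control of a $0/0$ ratio near such points. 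So compactness fails on $\di\fG_k$, not just at infinity. The paper's Lemma~\ref{lemma:mk} supplies the missing quantitative step in two lines: since $B$ lies in the open cone with $S_k(B)\ge c_0$ and $\|B\|\le M$, one has $\fl(B)-\vfe\ee\in\fG_k$ for some $\vfe=\vfe(c_0,M)>0$, and then G{\aa}rding's (Minkowski-type) superadditivity of $\tS_k^{1/(k-m)}(\fl,\dots,\fl,\,\cdot\,,\dots,\cdot\,)$ on $\fG_k$ gives
\[
\tS_k^{\frac{1}{k-m}}\big(\fl,\dots,\fl,\fl(B),\dots,\fl(B)\big)\ \ge\ \tS_k^{\frac{1}{k-m}}\big(\fl,\dots,\fl,\vfe\ee,\dots,\vfe\ee\big)=\vfe\,S_m^{\frac{1}{k-m}}(\fl),
\]
i.e.\ $S_m(A)\le\vfe^{-(k-m)}\dS_k(A,\dots,A,B,\dots,B)$. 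This also answers your worry that one ``cannot simply compare $\CH(\psi)$ with a multiple of $I$'': the comparison is made inside the cone $\fG_k$ rather than in the positive-definite order, and that suffices. Your telescoping alternative would work for the same reason ($\dS_{j+1}(A,\dots,A,B)\ge\vfe\,\dS_{j+1}(A,\dots,A,I)=\vfe\,S_j(A)$ once $\fl(B)-\vfe\ee\in\fG_{j+1}$), but as written neither of your two routes to the pointwise inequality is actually carried out.
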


Some special cases of Theorem~\ref{thm:main} were well known. For
example Cegrell and Persson proved in \cite{CegrellPersson1997} that if
both $u$ and $v$ are psh function on a domain with vanishing boundary
data, then for integer $j$ such that $0\leq j\leq n$,
$$ \int_{\fO} (-u) (dd^c u)^j (dd^c v)^{n-j} \leq \Big(\int_{\fO} (-u)
(dd^c u)^n \Big)^{\frac{j+1}{n+1}} \Big(\int_{\fO} (-v) (dd^c v)^n
\Big)^{\frac{n-j}{n+1}}, $$
which is just special case of equation~(\ref{eq:main}) with
$u_0=\cdots=u_j=u$ and $u_{j+1}=\cdots=u_k=v$. 

Since the proof of Theorem~\ref{thm:main} only uses the divergence
structure of Hessian operator $H_k$, so it can also be applied to the
study of real Hessian operator and integrals.  Let $D$ be a smooth
bounded domain in $\RR^n$. For real valued function $u_j \in
\cC^2(\bar{D}), j=0,1,\cdots,k$ and $u\in\cC^2(\bar{\fO})$ define
$$ G_k[u_0,\cdots, u_k] = \int_D (-u_0)
\dS_k[\RH(u_1),\cdots,\RH(u_k)] $$
and
$$ J_k[u]= G_k[u,\cdots,u], $$
where $\RH(u)$ is the Hessian matrix of $u$. 

\begin{thm}
  For any $k$-convex function $u_0, \cdots, u_k$ with $u_0=\cdots=u_k=0$
  on $\di D$, 
  \begin{equation}
	G_k[u_0,\cdots,u_k] \leq \prod_{j=0}^n
	\Big(J_k[u_j]\Big)^{\frac{1}{k+1}}.
  \end{equation}
  \label{thm:realmain}
\end{thm}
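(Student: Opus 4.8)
The plan is to reproduce, essentially word for word, the proof of Theorem~\ref{thm:main}(a), with the real Hessian $\RH$ replacing the complex Hessian $\CH$. As the paper observes, that proof uses only three structural features of $S_k$, each of which has a verbatim real counterpart: \textbf{(i)} the divergence (Newton tensor) structure, i.e.\ for a symmetric matrix $B$ one has $\dS_k[B,A_1,\dots,A_{k-1}]=\sum_{i,j}T^{ij}(A_1,\dots,A_{k-1})B_{ij}$ with $(T^{ij})$ symmetric, together with the classical fact that the real Newton transformations, and their polarizations, are divergence free, $\sum_i\partial_i\bigl(T^{ij}(\RH(w_1),\dots,\RH(w_{k-1}))\bigr)=0$; \textbf{(ii)} the positivity $\dS_k[\RH(w_1),\dots,\RH(w_k)]\ge0$ for $k$-convex $w_1,\dots,w_k$, the real analogue of the inequality for $\dH_k$ recalled in the Introduction; and \textbf{(iii)} the maximum principle, that for $k\ge1$ (the case $k=0$ being trivial) a $k$-convex function vanishing on $\di D$ is $\le0$ on $D$.

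\textbf{Step 1: the quadratic-form identity.} Fix $k$-convex $w_1,\dots,w_{k-1}\in\cC^2(\bar D)$ and $\phi,\psi\in\cC^2(\bar D)$ with $\phi=\psi=0$ on $\di D$. I would first prove, for \emph{smooth} $w_j$,
\begin{equation}
\int_D(-\phi)\,\dS_k[\RH(\psi),\RH(w_1),\dots,\RH(w_{k-1})]
=\int_D\sum_{i,j}T^{ij}(\RH(w_1),\dots,\RH(w_{k-1}))\,\partial_i\phi\,\partial_j\psi .
\label{eq:rbilin}
\end{equation}
Indeed, by (i), $\dS_k[\RH(\psi),\RH(w_1),\dots,\RH(w_{k-1})]=\sum_{i,j}T^{ij}\,\partial_i\partial_j\psi=\sum_i\partial_i\bigl(\textstyle\sum_j T^{ij}\partial_j\psi\bigr)$, and one integration by parts shifts the outer derivative onto $\phi$, the boundary term vanishing since $\phi|_{\di D}=0$. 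As both sides of \eqref{eq:rbilin} are continuous in the $\cC^2$-norms of the $w_j$, the identity extends to all $w_j\in\cC^2(\bar D)$ by density of smooth functions.

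\textbf{Step 2: symmetry, positivity, and the product inequality.} The right-hand side of \eqref{eq:rbilin} is symmetric in $(\phi,\psi)$; together with the symmetry of $\dS_k[\,\cdot\,]$ in its arguments, this shows $G_k[u_0,\dots,u_k]$ is invariant under all permutations of $u_0,\dots,u_k$ once every $u_j$ vanishes on $\di D$. When $w_1,\dots,w_{k-1}$ are $k$-convex, the matrix $\bigl(T^{ij}(\RH(w_1),\dots,\RH(w_{k-1}))\bigr)$ is positive semidefinite: for $\xi\in\RR^n$, $\sum_{i,j}T^{ij}\xi_i\xi_j=\dS_k[\xi\otimes\xi,\RH(w_1),\dots,\RH(w_{k-1})]\ge0$ by (ii), since the rank-one matrix $\xi\otimes\xi=(\xi_i\xi_j)$ has eigenvalues $(|\xi|^2,0,\dots,0)\in\fG_k$. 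Hence, by \eqref{eq:rbilin}, $(\phi,\psi)\mapsto G_k[\phi,\psi,w_1,\dots,w_{k-1}]$ is a symmetric positive semidefinite bilinear form on $\{v\in\cC^2(\bar D):v|_{\di D}=0\}$, so Cauchy--Schwarz gives
\begin{equation}
G_k[u_0,u_1,w_1,\dots,w_{k-1}]^2\le G_k[u_0,u_0,w_1,\dots,w_{k-1}]\,G_k[u_1,u_1,w_1,\dots,w_{k-1}]
\label{eq:rCS}
\end{equation}
for all $u_0,u_1\in\cC^2(\bar D)$ and all $k$-convex $w_1,\dots,w_{k-1}\in\cC^2(\bar D)$ vanishing on $\di D$. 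Also, by (ii)--(iii), $G_k[u_{i_0},\dots,u_{i_k}]\ge0$ whenever its arguments are drawn from a fixed family of $k$-convex functions vanishing on $\di D$. Given such $u_0,\dots,u_k$, the bound $G_k[u_0,\dots,u_k]\le\prod_{j=0}^{k}\bigl(J_k[u_j]\bigr)^{1/(k+1)}$ then follows from \eqref{eq:rCS} and this nonnegativity by the same elementary combinatorial step as in Theorem~\ref{thm:main}(a): for two functions $u,v$, \eqref{eq:rCS} makes $p\mapsto\log G_k[\,\underbrace{u,\dots,u}_{p},\underbrace{v,\dots,v}_{k+1-p}\,]$ convex on $\{0,\dots,k+1\}$ and hence bounded by its chord, so $G_k[u^{(p)},v^{(k+1-p)}]\le J_k[u]^{p/(k+1)}J_k[v]^{(k+1-p)/(k+1)}$; and for general arguments, \eqref{eq:rCS} makes $\log G_k$ convex along every edge direction of the simplex of multi-indices in $u_0,\dots,u_k$, hence dominated by the affine interpolant of its vertex values $\log J_k[u_j]$.

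\textbf{The main obstacle.} The only genuinely substantive point is the divergence identity of Step~1 --- that the polarized real Newton tensor satisfies $\sum_i\partial_iT^{ij}=0$ --- together with the approximation argument legitimizing the integration by parts at $\cC^2$ regularity. Both are classical for the real $k$-Hessian, and they are formally the same computation that underlies Theorem~\ref{thm:main}(a); granting them, the rest of the argument is purely formal and reproduces the complex case line by line.
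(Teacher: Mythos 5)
Your proposal is correct and follows essentially the same route as the paper: divergence structure of the polarized Newton tensor, integration by parts to obtain a symmetric positive semidefinite bilinear form in the first two arguments, Cauchy--Schwarz, and then the H\"ormander-type algebraic lemma (Lemma~\ref{lemma:alg}) to pass to the full product inequality. The one step you defer as classical --- the identity $\sum_j\partial_j\dS^{ij}_{k-1}\bigl(\RH(w_1),\dots,\RH(w_{k-1})\bigr)=0$ --- is precisely what the paper proves in Lemma~\ref{lemma:div} by pairing the antisymmetry of the generalized Kronecker symbol against the symmetry of the third derivatives, so nothing essential is missing.
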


\begin{thm}
  Suppose $D$ is smooth bounded domain in $\RR^n$ such that $S_{k-1}(II)>0$
  on $\di D$, where $II$ is the second fundamental form of $\di D$,
  then for integer $m$ with $0\leq m < k$, there exists constant $C$
  depends on $m$, $k$ and $D$ such that
  \begin{equation}
 	\Big(J_m[u]\Big)^{\frac{1}{m+1}} \leq C \Big(J_k[u]\Big)^{\frac{1}{k+1}} 
	\label{eq:RPC}
  \end{equation}
  for any $k$-convex function $u$ with $u=0$ on $\di D$. 
  \label{thm:RPC}
\end{thm}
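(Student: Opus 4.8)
The plan parallels the complex case. The engine is the H\"older-type inequality of Theorem~\ref{thm:realmain}, and the geometry of $D$ enters only through one fixed barrier function. I would first produce a $k$-convex $w\in\cC^\infty(\bar D)$ with $w=0$ on $\di D$, $w<0$ in $D$, and $\fl(\RH(w))$ confined to a compact subset $K$ of the interior of $\fG_k$, and then prove the pointwise inequality
\begin{equation}
  S_m(A)\ \leq\ C_0\,\dS_k\big[\,\underbrace{A,\dots,A}_{m},\ \underbrace{\RH(w),\dots,\RH(w)}_{k-m}\,\big]
  \qquad\text{for every real symmetric }A\text{ with }\fl(A)\in\fG_k,
  \label{eq:RPC-pointwise}
\end{equation}
with $C_0=C_0(n,k,m,K)$. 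Granting these, the theorem is immediate. A $k$-convex function vanishing on $\di D$ is $\leq 0$ in $D$, so multiplying \eqref{eq:RPC-pointwise} by $-u\geq 0$ and integrating gives
$$ J_m[u]=\int_D(-u)\,S_m(\RH(u))\ \leq\ C_0\,G_k\big[\,\underbrace{u,\dots,u}_{m+1},\underbrace{w,\dots,w}_{k-m}\,\big]\ \leq\ C_0\,\big(J_k[u]\big)^{\frac{m+1}{k+1}}\big(J_k[w]\big)^{\frac{k-m}{k+1}}, $$
the second inequality being Theorem~\ref{thm:realmain} applied to the displayed mixed integral ($m+1$ of the $k+1$ slots equal to $u$, the remaining $k-m$ equal to $w$). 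Raising to the power $\tfrac1{m+1}$ yields \eqref{eq:RPC} with $C=C_0^{1/(m+1)}\big(J_k[w]\big)^{(k-m)/((m+1)(k+1))}$, which depends on $m$, $k$ and $D$ only.

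For the barrier, the hypothesis should first be promoted to $\fl(II)\in\mathrm{int}\,\fG_{k-1}$ (the $(k-1)$-positive cone in $\RR^{n-1}$) at every point of $\di D$. Indeed $x\mapsto\fl(II(x))$ maps the connected set $\di D$ into $\{S_{k-1}>0\}$; at a point of $\di D$ farthest from a fixed interior point of $D$, comparison with the circumscribed ball shows $II$ is positive definite there, so the image lies in the connected component of $\{S_{k-1}>0\}$ that meets the positive orthant, and that component is exactly $\mathrm{int}\,\fG_{k-1}$ because $S_{k-1}(\fl)>0$ together with $S_j(\fl)\geq 0$ for $j<k-1$ forces $S_j(\fl)>0$ for all $j\leq k-1$ (MacLaurin's inequality, which makes $\mathrm{int}\,\fG_{k-1}$ closed in $\{S_{k-1}>0\}$). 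With $\di D$ strictly $(k-1)$-convex in this sense, the existence of $w$ is standard: one may solve the Dirichlet problem $S_k(\RH(w))=1$ in $D$, $w=0$ on $\di D$, whose smooth $k$-convex solution exists by the Caffarelli--Nirenberg--Spruck theory; then $|\fl(\RH(w))|$ is bounded by the $\cC^2$ estimates, while MacLaurin's inequality gives $S_j(\RH(w))\geq S_k(\RH(w))^{j/k}=1$ for $j\leq k$, so $\fl(\RH(w))$ is trapped in a compact $K\subset\mathrm{int}\,\fG_k$. (Alternatively one builds $w$ by hand: $\tfrac1A(e^{A\rho}-1)$ for a defining function $\rho$ and $A$ large near $\di D$, suitably extended into the interior by a regularized maximum with a deep convex quadratic, an operation that preserves both $k$-convexity and, by the concavity of $S_j^{1/j}$ on $\fG_k$, the lower bounds $S_j(\RH(w))\geq c_j>0$.)

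Finally, \eqref{eq:RPC-pointwise} is elementary. Since $K\subset\mathrm{int}\,\fG_k$ is compact there is $\epsilon_0=\epsilon_0(K)>0$ such that $B_0:=\RH(w)-\epsilon_0\,\mathrm{Id}$ still has $\fl(B_0)\in\fG_k$; expanding $\dS_k$ multilinearly,
$$ \dS_k\big[A^{[m]},(B_0+\epsilon_0\,\mathrm{Id})^{[k-m]}\big]=\sum_{l=0}^{k-m}\binom{k-m}{l}\epsilon_0^{\,l}\,\dS_k\big[A^{[m]},B_0^{[k-m-l]},\mathrm{Id}^{[l]}\big]. $$
Each summand is $\geq 0$ because $A$, $B_0$ and $\mathrm{Id}$ all have eigenvalues in $\fG_k$ (the same nonnegativity of the polarized symmetric function already used in the paper for $k$-convex data, i.e.\ G{\aa}rding's inequality), and the $l=k-m$ term equals $\epsilon_0^{\,k-m}\,\dS_k[A^{[m]},\mathrm{Id}^{[k-m]}]=\epsilon_0^{\,k-m}\,\gamma_{n,k,m}\,S_m(A)$ with $\gamma_{n,k,m}>0$ the combinatorial constant from the polarization identity; hence \eqref{eq:RPC-pointwise} holds with $C_0=\epsilon_0^{-(k-m)}\gamma_{n,k,m}^{-1}$. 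Given Theorem~\ref{thm:realmain}, the only step that genuinely costs anything is the construction of this barrier with its Hessian uniformly bounded away from $\di\fG_k$, together with the companion observation that connectedness of $\di D$ upgrades the stated hypothesis $S_{k-1}(II)>0$ to $\fl(II)\in\mathrm{int}\,\fG_{k-1}$; everything else, as above, is bookkeeping.
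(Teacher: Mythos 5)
Your proof follows essentially the same route as the paper: solve $S_k(\RH(w))=1$, $w=0$ on $\di D$, to get a barrier whose Hessian stays a fixed distance inside $\fG_k$, establish the pointwise bound $S_m(A)\leq C_0\,\dS_k[A,\dots,A,\RH(w),\dots,\RH(w)]$, and conclude via the H\"older inequality of Theorem~\ref{thm:realmain}, which is exactly the paper's proof of Theorem~\ref{thm:CPC} transplanted to the real setting as the paper intends. The only (immaterial) difference is that you prove the pointwise bound by expanding $\dS_k$ multilinearly in $\RH(w)=B_0+\epsilon_0\,\mathrm{Id}$ and using nonnegativity of each polarized term, whereas the paper's Lemma~\ref{lemma:mk} uses the superadditivity of $\dS_k^{1/(k-m)}$ from G{\aa}rding's inequality; you also spell out the barrier construction and the upgrade of $S_{k-1}(II)>0$ to $\fl(II)\in\mathrm{int}\,\fG_{k-1}$, which the paper leaves to the cited existence results.
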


Theorem~\ref{thm:RPC} was originally proved by Trudinger and Wang in
\cite{TrudingerWang1998} using parabolic method. Compared with the
proof in \cite{TrudingerWang1998}, our proof is quicker and more
elementary, and can be adopted to study the complex Hessian integral.
However, Trudinger and Wang's proof contains more information, they 
showed that the the best constant $C$ in equation~(\ref{eq:RPC}) is
attained by the solution of the Dirichlet problem: $H_k[u]=H_m[u]$ and
$u=0$ on $\di D$.

Hessian integrals like $J_k$ were first studied by
Bakelman~\cite{Bakelman1983} and Tso~\cite{Tso1990} in the setting of
real \MA-operator. In \cite{Wang1994}, Wang studied the real Hessian
integrals for general $k$ and established some Sobolev type
inequalities. For more information on real Hessian integral, see
\cite{Wang2009} and the references therein. 

For simplicity, we always assume that the functions studied in this
paper are at least twice differentiable. Using the standard
approximation technique, it is possible to extend the result to the
case of real and complex Hessian measure for non-smooth functions, see
\cite{TrudingerWang2002,Blocki2005} for more details.


\section{Complex Hessian Integrals}

Let $\fo$ is the \KH{} form of the standard Euclidean metric of $\CC^n$, i.e.
$$ \fo = \ii \sum_{j=1}^n dz^j \wedge d\bar{z}^j. $$
It is easy to check that for $u, u_0,\cdots, u_k \in \cP_k(\fO)$,
$$ H_k[u] \fo^n = (\ii\ddb u)^k \wedge \fo^{n-k} $$
and
$$\dH_k[u_1,\cdots,u_k] \fo^n = (\ii \ddb u_1)\wedge
\cdots \wedge (\ii \ddb u_k) \wedge \fo^{n-k}. $$
Hence
$$ I_k[u]=\int_{\fO} (-u) (\ii\ddb u)^k \wedge \fo^{n-k} $$
and
\begin{equation}
  F_k[u_0,\cdots,u_k] = \int_{\fO} (-u_0) (\ii \ddb u_1)\wedge \cdots
  \wedge (\ii \ddb u_k) \wedge \fo^{n-k}. 
  \label{eq:Fk}
\end{equation}
Obviously the $F_k$ in equation~\ref{eq:Fk} is symmetric in $u_1,
\cdots, u_k$.  If $u_j\in\cP^0_k(\fO), j=0,\cdots, k$, then by integration by part,
\begin{equation*}
  \begin{split}
	F_k[u_0,\cdots,u_k]&= \int_{\fO} \ii \di u_0 \wedge \dib u_1\wedge \cdots
	\wedge (\ii \ddb u_k) \wedge \fo^{n-k} \\ 
	&= \int_{\fO} (-u_1) (\ii \ddb u_0)\wedge \cdots
	\wedge (\ii \ddb u_k) \wedge \fo^{n-k} 
	= F_k[u_1,u_0, \cdots, u_k] 
  \end{split}
\end{equation*}
Therefore $F_k[u_0,\cdots,u_k]$ is symmetric in its arguments if
they are in $\cP^0_k(\fO)$.

\begin{lemma} For any $u_0,u_1\in \cP^0_k(\fO)$ and any $v_2,\cdots,v_k\in \cP_k(\fO)$, 
  \begin{equation}
	F_k[u_0,u_1,v_2,\cdots,v_k]^2 \leq F_k[u_0,u_0,v_2,\cdots,v_k]
	F_k[u_1,u_1,v_2,\cdots, v_k].
  \end{equation}
  \label{lemma:1}
\end{lemma}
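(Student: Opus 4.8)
The statement is a Cauchy--Schwarz-type inequality for the multilinear form $F_k$, so the plan is to exhibit $F_k[u_0,u_1,v_2,\dots,v_k]$ as (something like) an inner product and apply the classical Cauchy--Schwarz inequality. First I would use the symmetry and integration-by-parts identities established just above the lemma to rewrite
$$ F_k[u_0,u_1,v_2,\dots,v_k] = \int_{\fO} \ii\,\di u_0 \wedge \dib u_1 \wedge (\ii\ddb v_2)\wedge\cdots\wedge(\ii\ddb v_k)\wedge\fo^{n-k}. $$
Write $T = (\ii\ddb v_2)\wedge\cdots\wedge(\ii\ddb v_k)\wedge\fo^{n-k}$; this is a smooth closed positive $(n{-}1,n{-}1)$-form, since each $v_j\in\cP_k(\fO)$ and $\fo^{n-k}$ contributes positivity in the remaining directions (this is exactly the content of $\dH_k[\cdot]\ge 0$ applied with enough slots filled by the K\"ahler form). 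The key point is then the pointwise inequality
$$ \big|\ii\,\di u_0\wedge\dib u_1 \wedge T\big| \le \big(\ii\,\di u_0\wedge\dib u_0\wedge T\big)^{1/2}\big(\ii\,\di u_1\wedge\dib u_1\wedge T\big)^{1/2} $$
valid at each point of $\fO$, where these are compared as multiples of the volume form $\fo^n$. This is the Cauchy--Schwarz inequality for the positive semidefinite Hermitian form $(\alpha,\beta)\mapsto \ii\,\alpha\wedge\bar\beta\wedge T$ on $(1,0)$-forms; positivity of this form is precisely the positivity of $T$.

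Granting the pointwise estimate, integrate over $\fO$ and apply the Cauchy--Schwarz inequality for the $L^2$ inner product of the two nonnegative functions $\big(\ii\,\di u_0\wedge\dib u_0\wedge T/\fo^n\big)^{1/2}$ and $\big(\ii\,\di u_1\wedge\dib u_1\wedge T/\fo^n\big)^{1/2}$, to obtain
$$ F_k[u_0,u_1,v_2,\dots,v_k]^2 \le \Big(\int_{\fO}\ii\,\di u_0\wedge\dib u_0\wedge T\Big)\Big(\int_{\fO}\ii\,\di u_1\wedge\dib u_1\wedge T\Big). $$
Finally, integrating by parts once more (using $u_0,u_1\in\cP_k^0(\fO)$, so the boundary terms vanish, and $dT=0$) identifies the right-hand side factors as $F_k[u_0,u_0,v_2,\dots,v_k]$ and $F_k[u_1,u_1,v_2,\dots,v_k]$ respectively, which is the claim.

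The main obstacle I anticipate is the careful justification of the pointwise Cauchy--Schwarz step: one must verify that $T$, written in coordinates at a fixed point, really does define a positive semidefinite Hermitian form on $(1,0)$-covectors, and that the mixed term $\ii\,\di u_0\wedge\dib u_1\wedge T$ is the associated sesquilinear pairing. After a unitary change of coordinates diagonalizing the relevant data this reduces to the elementary fact that $S_k$, suitably polarized, is nonnegative on the $k$-positive cone together with contributions from $\fo^{n-k}$; equivalently it is the infinitesimal/linearized version of the Gårding-type inequality $\dH_k[\cdot]\ge 0$ recalled in the introduction. A minor secondary point is ensuring all integrations by parts are legitimate, which is immediate here because everything is $\cC^2$ up to the boundary and the functions in $\cP_k^0(\fO)$ vanish on $\di\fO$.
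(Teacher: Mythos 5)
Your proposal is correct and rests on exactly the same ingredients as the paper's proof: the integration-by-parts identity $F_k[u_0,u_1,\vec v]=\int_{\fO}\ii\,\di u_0\wedge\dib u_1\wedge T$ and the positivity of the Hermitian pairing $(\alpha,\beta)\mapsto\ii\,\alpha\wedge\bar\beta\wedge T$ determined by $T=(\ii\ddb v_2)\wedge\cdots\wedge(\ii\ddb v_k)\wedge\fo^{n-k}$. The only (cosmetic) difference is that you derive Cauchy--Schwarz by a pointwise estimate followed by an $L^2$ Cauchy--Schwarz, whereas the paper gets it in one stroke from the discriminant of the nonnegative quadratic $q(t)=F_k[u_0+tu_1,u_0+tu_1,v_2,\dots,v_k]$ --- which needs only the nonnegativity of $\ii\,\di w\wedge\dib w\wedge T$, the same fact you correctly identify as the crux.
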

\begin{proof}
  For $t\in \RR$, consider the quadratic function 
  \begin{equation*}
	\begin{split}
    q(t) &= F_k[u_0+tu_1,u_0+tu_1,v_2,\cdots,v_k] \\
	 &= F[u_1, u_1, v_2,\cdots, v_k]\; t^2 + 2 F[u_0,u_1,v_2,\cdots,v_k]\;t
	 + F[u_0, u_0, v_2, \cdots, v_k].
	\end{split}
  \end{equation*}
  Since
  \begin{equation*}
	\begin{split}
	  q(t)&=\int_{\fO} -(u_0+tu_1 ) (\ii \ddb u_0+tu_1) \wedge (\ii
	  \ddb v_2) \wedge (\ii ddb v_k) \wedge \fo^{n-k} \\
	  &= \int_{\fO} \ii \di (u_0+tu_1) \wedge \dib (u_0 + tu_1) \wedge (\ii\ddb v_2)
	  \wedge (\ii ddb v_k) \wedge \fo^{n-k} \\
	  &\geq 0.
	\end{split}
  \end{equation*}
  So
  $$ F_k[u_0,u_1,v_2,\cdots,v_k]^2 \leq F_k[u_0,u_0,v_2,\cdots,v_k]
	F_k[u_1,u_1,v_2,\cdots, v_k].$$
\end{proof}

With Lemma~\ref{lemma:1}, both statements in Theorem~\ref{thm:main} will follow from
following well known algebraic lemma.

\begin{lemma}
  Let $S$ be a set and $f$ be a non-negative symmetric function defined on
  $S^k=S \times \cdots \times S$.
  Assume that for any $x_1,\cdots, x_k \in S$, 
  $$ f(x_1,x_2,x_3,\cdots, x_k)^2 \leq f(x_1,x_1,x_3,\cdots,x_k)
  f(x_2,x_2,x_3,\cdots, x_k). $$
  Then it follows 
  $$ f(x_1,x_2,\cdots, x_k)^k \leq \prod_{j=1}^k
  f(x_j,\cdots,x_j).$$ 
  Moreover if $S$ is a cone in some linear space, and $f$ is linear in
  each of its argument, then
  $$ f(x+y,\cdots,x+y)^{1/k} \leq
  f(x,\cdots,x)^{1/k}+f(y,\cdots,y)^{1/k}$$
  for any $x$ and $y$ in $S$.
  \label{lemma:alg}
\end{lemma}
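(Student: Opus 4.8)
The plan is to prove the multiplicative inequality first, by induction on $k$, and then deduce the subadditivity statement from it by a routine homogenization/normalization argument. For the base case $k=1$ the inequality $f(x_1) \le f(x_1)$ is trivial, and for $k=2$ it is exactly the hypothesis. For the inductive step I would fix $x_1,\dots,x_k$ and try to bound $f(x_1,x_2,x_3,\dots,x_k)$ by first using the hypothesis in the first two slots to get $f(x_1,x_2,x_3,\dots,x_k)^2 \le f(x_1,x_1,x_3,\dots,x_k)\, f(x_2,x_2,x_3,\dots,x_k)$, and then estimating each factor on the right. Each such factor, e.g. $f(x_1,x_1,x_3,\dots,x_k)$, is a value of a symmetric function of the $k-1$ ``variables'' $x_1,x_3,x_4,\dots,x_k$ where $x_1$ is repeated; but the cleaner route is to repeatedly apply the two-slot hypothesis to reduce a mixed term to a geometric mean of terms in which one more slot has been made ``diagonal'', and iterate. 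Concretely, I expect to show by a short separate induction that $f(x_1,\dots,x_k) \le \prod_{j} f(x_j,x_j,x_3,\dots,x_k)^{?}$ — cascading the two-variable inequality down the list of arguments — ultimately arriving at $f(x_1,\dots,x_k)^k \le \prod_{j=1}^k f(x_j,\dots,x_j)$ after collecting exponents (each exponent works out to a power of $1/2$ at each stage, and the product telescopes to the stated bound).

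A cleaner way to organize the induction that I would actually write down: prove the stronger-looking but equivalent claim that for all $m$ with $1 \le m \le k$,
\[
  f(x_1,\dots,x_k) \le \Big(\prod_{i=1}^m f(\underbrace{x_i,\dots,x_i}_{m},x_{m+1},\dots,x_k)\Big)^{1/m},
\]
and induct on $m$. The step from $m$ to $m+1$ uses the hypothesis to split the $i$-th factor $f(x_i,\dots,x_i,x_{m+1},\dots,x_k)$ (with $m$ copies of $x_i$) by moving the slot currently holding $x_{m+1}$ toward $x_i$, at the cost of introducing $f(x_i,\dots,x_i,x_{m+1},x_{m+2},\dots,x_k)$-type terms with $m+1$ copies; a Cauchy–Schwarz-style bookkeeping of the resulting geometric means, together with the AM–GM inequality to recombine cross terms $f(x_i,\dots)^{1/2} f(x_j,\dots)^{1/2}$, yields the $m+1$ case. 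Taking $m=k$ gives the desired $f(x_1,\dots,x_k)^k \le \prod_{j=1}^k f(x_j,\dots,x_j)$.

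For the second statement, assume $S$ is a cone and $f$ is multilinear. Set $a = f(x,\dots,x)^{1/k}$ and $b = f(y,\dots,y)^{1/k}$; if $a=b=0$ then multilinearity together with the first inequality forces all mixed terms $f(x,\dots,x,y,\dots,y)$ to vanish and hence $f(x+y,\dots,x+y)=0$, so assume $a+b>0$. Expand $f(x+y,\dots,x+y)$ by multilinearity into $\sum_{j=0}^k \binom{k}{j} f(\underbrace{x,\dots,x}_{j},\underbrace{y,\dots,y}_{k-j})$, and apply the already-proved multiplicative inequality to each mixed term: $f(x,\dots,x,y,\dots,y) \le f(x,\dots,x)^{j/k} f(y,\dots,y)^{(k-j)/k} = a^j b^{k-j}$. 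Summing gives $f(x+y,\dots,x+y) \le \sum_{j=0}^k \binom{k}{j} a^j b^{k-j} = (a+b)^k$, and taking $k$-th roots yields $f(x+y,\dots,x+y)^{1/k} \le a + b = f(x,\dots,x)^{1/k} + f(y,\dots,y)^{1/k}$, as claimed.

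The main obstacle is the exponent bookkeeping in the induction for the multiplicative inequality: one must choose the right intermediate statement so that each application of the two-slot hypothesis is ``balanced'' and the accumulated powers of $1/2$ telescope cleanly to give exactly the $k$-th root; a naively chosen induction hypothesis produces worse constants. Once the multiplicative inequality is in hand, the deduction of subadditivity via binomial expansion and AM–GM (hidden inside $(a+b)^k = \sum \binom{k}{j} a^j b^{k-j}$) is entirely mechanical.
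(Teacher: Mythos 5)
The paper itself gives no proof of this lemma (it simply cites H\"ormander), so the only question is whether your reconstruction stands on its own. Your treatment of the second assertion does: the binomial expansion of $f(x+y,\dots,x+y)$ by multilinearity and symmetry, the bound $f(\underbrace{x,\dots,x}_{j},\underbrace{y,\dots,y}_{k-j})\le a^{j}b^{k-j}$ obtained from the first assertion, and the resummation to $(a+b)^k$ constitute a complete and correct deduction, and your handling of the degenerate case $a=b=0$ is fine.

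For the first assertion, however, the inductive step is asserted rather than proved, and the mechanism you describe does not close it. Your intermediate statement, bounding $f(x_1,\dots,x_k)$ by $\bigl(\prod_{i=1}^m C_i\bigr)^{1/m}$ with $C_i=f(\underbrace{x_i,\dots,x_i}_{m},x_{m+1},\dots,x_k)$, is the right one, but a single application of the two-slot hypothesis to $C_i$ (taking one slot holding $x_i$ and the slot holding $x_{m+1}$) gives only $C_i^2\le B_i\,D_i$, where $B_i=f(\underbrace{x_i,\dots,x_i}_{m+1},x_{m+2},\dots,x_k)$ and $D_i$ carries \emph{two} copies of $x_{m+1}$; no AM--GM recombination of cross terms between different indices $i$ and $j$ converts this into the bound you need, namely $C_i\le B_i^{m/(m+1)}B_{m+1}^{1/(m+1)}$. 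The missing ingredient is a two-variable interpolation lemma: for fixed $i$ set
\[
  a_j=f(\underbrace{x_i,\dots,x_i}_{j},\underbrace{x_{m+1},\dots,x_{m+1}}_{m+1-j},x_{m+2},\dots,x_k),
  \qquad 0\le j\le m+1,
\]
so that $a_{m+1}=B_i$, $a_m=C_i$, $a_0=B_{m+1}$. The hypothesis applied to one $x_i$-slot and one $x_{m+1}$-slot gives $a_j^2\le a_{j-1}a_{j+1}$, i.e.\ $\log a_j$ is a convex sequence (with $\log 0=-\infty$; any vanishing $a_j$ makes the target inequality trivial), and convexity below the chord yields $a_m\le a_{m+1}^{m/(m+1)}a_0^{1/(m+1)}$, which is exactly the required bound; taking the product over $i=1,\dots,m$ then finishes the step in one line. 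This discrete log-convexity chain is the one genuinely non-mechanical part of the lemma and is precisely what your ``exponent bookkeeping'' elides; once it is stated and proved, your plan becomes a complete proof and coincides with the standard argument in H\"ormander.
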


Proof of this lemma can be found in \cite{Hormander1994}.

To prove Theorem~\ref{thm:CPC}, we need another simple algebraic lemma for
symmetric functions.

\begin{lemma}
  Let $\mu \in \fG_k \subset \RR^n$ with $S_k(\mu)>0$, then there
  exists $C$ depending on $\dist(\mu,\di\fG_k)$ only such that for
  positive integer $m<k$ and $\forall\, \fl \in \fG_k$ 
  $$ S_m(\fl) \leq C \tS_k(\fl,\cdots,\fl, \mu,\cdots, \mu), $$
  where $\fl$ and $\mu$ appear $m$ and $k-m$ times in $\tS_k$
  respectively.
  \label{lemma:mk}
\end{lemma}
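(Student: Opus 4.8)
The plan is to reduce the statement to a compactness argument on the $k$-positive cone. First I would recast the claim: since $S_m$ and $\dS_k(\fl,\dots,\fl,\mu,\dots,\mu)$ are both homogeneous (of degrees $m$ and $m$ respectively in $\fl$), it suffices to prove the inequality for $\fl$ on the compact slice $\{\fl\in\fG_k : |\fl|=1\}$, or more conveniently on $\{S_1(\fl)=1\}\cap\fG_k$, which is compact because $\fG_k$ is a closed cone contained in the half-space $S_1\geq 0$ and the only way $|\fl|\to\infty$ with $S_1$ fixed would force some $\fl_i\to-\infty$, eventually violating $S_1\geq0$ or a higher $S_j\geq0$; I should state this compactness carefully, perhaps citing the standard fact that $\fG_k\cap\{S_1=1\}$ is compact (it is a well-known property of the G\r{a}rding cones). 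By continuity $S_m$ attains a finite maximum $M$ on this slice, so $S_m(\fl)\leq M\cdot S_1(\fl)^m$ for all $\fl\in\fG_k$.

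The heart of the matter is then a lower bound $\dS_k(\fl,\dots,\fl,\mu,\dots,\mu)\geq c\, S_1(\fl)^m$ on the same slice, with $c>0$ depending only on $\dist(\mu,\di\fG_k)$. For this I would invoke the G\r{a}rding theory of hyperbolic polynomials: $S_k$ is hyperbolic with respect to any $\mu$ in the interior of $\fG_k$ (equivalently $S_k(\mu)>0$ together with $\mu\in\fG_k$), and G\r{a}rding's inequality gives that the mixed form $\dS_k$ restricted to $\fG_k$ is nonnegative, and in fact $\dS_k(\fl,\dots,\fl,\mu,\dots,\mu)>0$ whenever $\fl\in\fG_k\setminus\{0\}$ and $\mu\in\mathrm{int}\,\fG_k$. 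Since the slice $\{\fl\in\fG_k: S_1(\fl)=1\}$ is compact and the function $\fl\mapsto \dS_k(\fl,\dots,\fl,\mu,\dots,\mu)$ is continuous and strictly positive there, it attains a positive minimum $c=c(\mu)>0$; homogeneity then upgrades this to $\dS_k(\fl,\dots,\fl,\mu,\dots,\mu)\geq c\,S_1(\fl)^m$ for all $\fl\in\fG_k$. Combining the two bounds gives $S_m(\fl)\leq (M/c)\,\dS_k(\fl,\dots,\fl,\mu,\dots,\mu)$, so $C=M/c$ works.

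The remaining point, and the one requiring the most care, is the claimed dependence of $C$ on $\dist(\mu,\di\fG_k)$ alone (and not, say, on $|\mu|$). This too follows from homogeneity: replacing $\mu$ by $t\mu$ scales $\dS_k(\fl,\dots,\fl,\mu,\dots,\mu)$ by $t^{k-m}$ and scales $\dist(\mu,\di\fG_k)$ by $t$, so we may normalize, e.g., to $|\mu|=1$, on which the set of admissible $\mu$ with $\dist(\mu,\di\fG_k)\geq\delta$ is compact and disjoint from $\di\fG_k$; the minimum $c$ above then depends continuously and monotonically on $\delta$, and one checks $c(\delta)>0$ for every $\delta>0$. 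I expect the main obstacle to be pinning down the strict positivity of the mixed symmetric function $\dS_k(\fl,\dots,\fl,\mu,\dots,\mu)$ for $\fl\in\fG_k\setminus\{0\}$ and $\mu$ interior — the nonstrict inequality is immediate from $\dH_k\geq0$ as already recorded in the introduction, but strictness on the boundary of the cone is where one must appeal to the precise form of G\r{a}rding's inequality (with equality characterization) or argue directly using the recursion $S_k(\fl)$ vanishing to order exactly one in suitable directions; once that is in hand, the compactness wrap-up is routine.
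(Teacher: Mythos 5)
Your reduction has a genuine gap at exactly the point you flagged: the strict positivity of $\fl \mapsto \tS_k(\fl,\dots,\fl,\mu,\dots,\mu)$ on $\fG_k\setminus\{0\}$ is false once $m\geq 2$. Take $\fl=(1,0,\dots,0)$, which lies in $\fG_k$ (since $S_1(\fl)>0$ and $S_j(\fl)=0$ for $j\geq 2$) and is a nonzero point of your compact slice. Using the identity $\tS_k(\fl,\dots,\fl,\ee,\dots,\ee)=S_m(\fl)$ (with $\fl$ appearing $m$ times), together with the monotonicity of the multilinear form $\tS_k$ in each slot with respect to the cone ordering (a consequence of the nonnegativity of the fully polarized form on $\fG_k$, already recorded in the introduction), one gets for any $\mu$ with $M\ee-\mu\in\fG_k$ that $0\leq \tS_k(\fl,\dots,\fl,\mu,\dots,\mu)\leq M^{k-m}\,\tS_k(\fl,\dots,\fl,\ee,\dots,\ee)=M^{k-m}S_m(\fl)=0$. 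So the mixed form vanishes at this $\fl$ for \emph{every} admissible $\mu$, while $S_1(\fl)^m>0$: no positive minimum $c$ exists on the slice, and the chain $S_m\leq M S_1^m\leq (M/c)\,\tS_k(\cdots)$ collapses. The target inequality survives at this point only because $S_m(\fl)=0$ as well; in other words the correct comparison is a $0/0$ ratio on the common zero set, which compactness and continuity alone do not control.

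The repair is to compare $\tS_k(\fl,\dots,\fl,\mu,\dots,\mu)$ with $S_m(\fl)$ itself rather than with $S_1(\fl)^m$, and the paper does this quantitatively, with no compactness argument at all: since $S_k(\mu)>0$ places $\mu$ in the interior of $\fG_k$, there is $\vfe>0$ (controlled by $\dist(\mu,\di\fG_k)$) with $\mu-\vfe\ee\in\fG_k$; the G{\aa}rding superadditivity of $\nu\mapsto\tS_k^{1/(k-m)}(\fl,\dots,\fl,\nu,\dots,\nu)$ on $\fG_k$ then yields $\tS_k^{1/(k-m)}(\fl,\dots,\fl,\mu,\dots,\mu)\geq \tS_k^{1/(k-m)}(\fl,\dots,\fl,\vfe\ee,\dots,\vfe\ee)=\vfe\, S_m^{1/(k-m)}(\fl)$, i.e.\ the explicit constant $C=\vfe^{-(k-m)}$. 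If you insist on a compactness framework you would need to bound the ratio $S_m(\fl)\big/\tS_k(\fl,\dots,\fl,\mu,\dots,\mu)$ across its common zero locus, which in effect forces you back to the same superadditivity estimate. Your final paragraph on scaling in $\mu$ is fine but becomes unnecessary once the constant is produced explicitly.
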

\begin{proof}
  Since $S_k(\mu)>0$, so $\mu$ is in the interior of $\fG_k$,
  therefore there exits $\vfe>0$ such that
  $$ \mu-\vfe \ee \in \fG_k. $$
  where $\ee=(1,\cdots,1)\in\RR^n$. Hence by the G{\aa}rding's Inequality
  \begin{equation}
	\begin{split}
      \tS_k^{\frac{1}{k-m}}(\fl,\cdots,\fl,\mu,\cdots,\mu) 
      & \geq \tS_k^{\frac{1}{k-m}}(\fl,\cdots,\fl,\mu-\vfe \ee,\cdots,\mu-\vfe \ee) +  
      \tS_k^{\frac{1}{k-m}}(\fl,\cdots,\fl,\vfe \ee,\cdots,\vfe \ee) \\
	  & \geq \tS_k^{\frac{1}{k-m}}(\fl,\cdots,\fl,\vfe \ee,\cdots,\vfe \ee)
	   = \vfe S_m^{\frac{1}{k-m}}(\fl).
	\end{split}
  \end{equation}
  Therefore
  $$ S_m(\fl) \leq \frac{1}{\vfe^{k-m}} \tS_k(\fl,\cdots,\fl,\mu,\cdots,\mu).$$
\end{proof}

\noindent
\emph{Proof of Theorem~\ref{thm:CPC}.} Since $\fO$ is strong
$k$-pseudoconvex, it is known \cite{Vinacua1988,Li2004} that there
exits a unique $v\in \cP_0^k(\fO)$ such that $H_k[v]=1$. Hence there
exists $\vfe>0$ depending on $v$ only, such that $\CH(v)-\vfe I_n \in
\fG_k \subset \Herm(n)$ pointwise in $\fO$, where $I_n$ is the
identity matrix of size $n$. Similar to the proof of
Lemma~\ref{lemma:mk}, there exists $C$ such that
$$ H_m[u]=S_m[\CH(u)] \leq C \dS_k[\CH(u),\cdots,\CH(u),
\CH(v),\cdots, \CH(v)] $$
where $\CH(u)$ and $\CH(v)$ appear $m$ and $k-m$ times in $\dS_k$.
Therefore
\begin{equation*}
  \begin{split}
	I_m[u] &= \int_{\fO} (-u) H_m[u] \\
	&\leq C \int_{\fO} (-u) \dS_k[\CH(u),\cdots,\CH(u), \CH(v),\cdots, \CH(v)]\\ 
	&= C F_k[u,\cdots,u,v,\cdots,v] \\
	&\leq C \big(I_k[u]\big)^{\frac{m+1}{k+1}} \big(I_k[v]\big)^{\frac{1}{k+1}}.
  \end{split}
\end{equation*}
So
$$ \big(I_m[u]\big)^{\frac{1}{m+1}} \leq \tilde{C} \big(I_k[u]\big)^{\frac{l+1}{k+1}}.$$
\qed

\section{Real Hessian Integrals}

To prove Theorem~\ref{thm:realmain} and Theorem~\ref{thm:RPC}, we only
need to show that the functional $G_k$ is symmetric in its arguments
when they vanish on the boundary and the analogous Lemma~\ref{lemma:1}
for $G_k$. They both follow from the divergence structure of
polarized real Hessian operator. 

Let $A^{(j)},j=1,2,\cdots,n$ be symmetric $n$ by $n$ matrices, then
the polarized symmetric function $\dS_k$ is given by 
$$ \dS_k(A^{(1)},\cdots, A^{(k)}) = \frac{1}{k!}
\sum_{\substack{i_1,\cdots,i_k\\ j_1,\cdots,j_k}} \fd^{i_1 \cdots
i_k}_{j_1\cdots j_k} A^{(1)}_{i_1 j_1} \cdots A^{(k)}_{i_k j_k}. $$
where $\fd^{i_1 \cdots i_k}_{j_1 \cdots j_k}$ is the generalized
Kronecker delta symbol. Denote
$$ \dS_{k-1}^{ij}(A^{(1)},\cdots, A^{(k-1)}) = \dod{\;}{A^{(k)}_{ij}}  
 \dS_k(A^{(1)},\cdots, A^{(k)}),$$
then
$$\dS_k(A^{(1)},\cdots, A^{(k)})=A^{(k)}_{ij}
\dS_{k-1}^{ij}(A^{(1)},\cdots, A^{(k-1)})$$
and
$$ \dS_{k-1}^{ij}(A^{(1)},\cdots, A^{(k-1)}) = \frac{1}{k!} 
\sum_{\substack{i_1,\cdots,i_{k-1}\\ j_1,\cdots,j_{k-1}}} \fd^{i_1 \cdots
i_{k-1} i}_{j_1\cdots j_{k-1} j} A^{(1)}_{i_1 j_1} \cdots A^{(k-1)}_{i_{k-1} j_{k-1}}. $$

\begin{lemma}
  Let $u_1,\cdots u_{k-1} \in \cC^2(D)$, then for any index
  $i\in\{1,\cdots, n\}$, 
  $$ \sum_{j=1}^n \dod{\;}{x_j}
  \dS^{ij}_{k-1}\Big(\RH(u_1),\cdots,\RH(u_{k-1})\Big)=0  $$
  \label{lemma:div}
\end{lemma}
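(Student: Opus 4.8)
The plan is to prove this divergence-free property purely combinatorially, exploiting the antisymmetry of the generalized Kronecker delta together with the symmetry of second partial derivatives. Writing out the definition,
\[
\dS^{ij}_{k-1}\bigl(\RH(u_1),\dots,\RH(u_{k-1})\bigr) = \frac{1}{k!} \sum_{\substack{i_1,\dots,i_{k-1}\\ j_1,\dots,j_{k-1}}} \fd^{i_1 \cdots i_{k-1} i}_{j_1 \cdots j_{k-1} j}\, (u_1)_{i_1 j_1} \cdots (u_{k-1})_{i_{k-1} j_{k-1}},
\]
where $(u_\ell)_{ab} = \partial^2 u_\ell / \partial x_a \partial x_b$. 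Differentiating in $x_j$ and summing over $j$, by the Leibniz rule one obtains a sum of $k-1$ terms, the $\ell$-th of which carries a third derivative $(u_\ell)_{i_\ell j_\ell j}$ and a product of Hessians for the other factors. So it suffices to show each such term vanishes on its own.

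First I would isolate a single term: fix $\ell$ and consider
\[
\sum_j \sum_{\text{indices}} \fd^{i_1 \cdots i_{k-1} i}_{j_1 \cdots j_{k-1} j}\, (u_\ell)_{i_\ell j_\ell j} \prod_{m \neq \ell} (u_m)_{i_m j_m}.
\]
The key observation is that the generalized Kronecker delta $\fd^{\cdots i_\ell \cdots i}_{\cdots j_\ell \cdots j}$ is antisymmetric under the simultaneous swap of the column pair $(j_\ell \leftrightarrow j)$ together with the row pair $(i_\ell \leftrightarrow i)$ — actually the cleanest route is to use that $\fd$ is totally antisymmetric in its lower indices $j_1,\dots,j_{k-1},j$ (and separately in its upper indices). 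In the expression above, the factor $(u_\ell)_{i_\ell j_\ell j}$ is symmetric in the index pair $(j_\ell, j)$ because partial derivatives commute: $(u_\ell)_{i_\ell j_\ell j} = \partial_{i_\ell}\partial_{j_\ell}\partial_j u_\ell$ is symmetric in $j_\ell$ and $j$. Contracting a quantity symmetric in $(j_\ell, j)$ against $\fd^{\cdots}_{\cdots j_\ell \cdots j}$, which is antisymmetric in $(j_\ell, j)$, gives zero. Hence every term in the Leibniz expansion vanishes, and so does the whole sum.

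The main subtlety — and the step I would be most careful about — is bookkeeping the index that plays a double role: in $\dS^{ij}_{k-1}$ the indices $i$ and $j$ are the "free" slot coming from the derivative $\partial/\partial A^{(k)}_{ij}$, so after differentiating by $x_j$ and summing, $j$ becomes a dummy index sitting in the last lower slot of $\fd$, while the $\ell$-th Hessian's lower index $j_\ell$ sits in an earlier lower slot; I must confirm these two slots are genuinely distinct antisymmetrized positions (they are, since $\fd$ has $k$ lower slots $j_1,\dots,j_{k-1},j$ all antisymmetrized) and that the third-derivative factor is symmetric in exactly those two indices. A minor point: one should also note $\dS^{ij}_{k-1}$ is itself symmetric in $i \leftrightarrow j$, so the statement is equivalent to $\sum_i \partial_{x_i} \dS^{ij}_{k-1} = 0$, but we do not need this. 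No hard analysis is involved; the entire content is the antisymmetry-kills-symmetry contraction argument, applied term by term to the Leibniz expansion.
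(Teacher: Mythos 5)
Your proposal is correct and is essentially identical to the paper's argument: expand the derivative by the Leibniz rule into $k-1$ terms, each carrying a third derivative $(u_\ell)_{i_\ell j_\ell j}$, and observe that each vanishes because the generalized Kronecker delta is antisymmetric in the lower index pair $(j_\ell, j)$ while the third derivative is symmetric in that pair. The extra care you take in tracking which lower slots of $\fd$ are being antisymmetrized is exactly the point the paper relies on, so there is nothing to add.
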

\begin{proof} Denote $\Hess(u_m)=A^{(m)}, m=1,\cdots, k-1$, then
  \begin{equation}
	\begin{split}
	  \sum_j \dod{\dS^{ij}_{k-1}}{x_j} &=\frac{1}{k!}\sum_j\dod{\;}{x_j} 
	  \Big(\sum_{\substack{i_1,\cdots,i_{k-1}\\ j_1,\cdots,j_{k-1}}}
	  \fd^{i_1 \cdots i_{k-1} i}_{j_1\cdots j_{k-1} j} A^{(1)}_{i_1
	  j_1} \cdots A^{(k-1)}_{i_{k-1} j_{k-1}}\Big) \\
	  &= \frac{1}{k!}
	  \sum_{\substack{i_1,\cdots,i_{k-1}\\ j_1,\cdots,j_{k-1},j}}
	  \fd^{i_1 \cdots i_{k-1} i}_{j_1\cdots j_{k-1} j}
	  \sum_{m=1}^{k-1} A^{(1)}_{i_1 j_1} \cdots A^{(m-1)}_{i_{m-1} j_{m-1}}
	  A^{(m)}_{i_m j_m,j}A^{(m+1)}_{i_{m+1} j_{m+1}}\cdots A^{(k-1)}_{i_{k-1} j_{k-1}} 
	  \label{eq:kro}
	\end{split}
  \end{equation}
Noticed that the $j_m$ and $j$ are anti-symmetric in the Kronecker
symbol, while symmetric in $A^{m}_{i_m j_m, j}$, so the sum in
equation~(\ref{eq:kro}) must be zero.
\end{proof}

Once the Lemma~\ref{lemma:div} is proved, then for any $k$-convex
functions $v_2,\cdots, v_k$, and $\cC^2$ functions $u_0$ and $u_1$
with $u_0=u_1$ on $\di D$, 
\begin{equation*}
  \begin{split}
  G_k[u_0,u_1,v_2,\cdots,v_k] &= \int_{D} - u_0 \dS_k[
  (u_1)_{ij},(v_2)_{ij},\cdots,(v_k)_{ij}] \\
  &=\int_D - u_0 (u_1)_{ij} \dS^{ij}_{k-1} \\
  &=\int_D (u_0)_i (u_1)_j \dS^{ij}_{k-1}
\end{split}
\end{equation*}
So $G_k$ is symmetric in its arguments if they vanish on the
boundary, and more over 
$$ G_k[u,u,v_2,\cdots,v_k] = \int_D u_i u_j \dS^{ij}_{k-1} \geq 0 $$
for any $k$-convex $v_2,\cdots, v_k$, and $\cC^2$ function $u$ with
$u=0$ on $\di D$. Then same argument in the proof of
Lemma~\ref{lemma:1} can be used to show following lemma which implies
Theorem~\ref{thm:realmain} and Theorem~\ref{thm:RPC}.

\begin{lemma}
  For any $k$-convex functions $v_2,\cdots, v_k$, and any $\cC^2$
  functions $u_0$ and $u_1$. If $u_0=u_1=0$ on $\di D$, then
  $$ G_k[u_0, u_1,v_2,\cdots, v_k]^2 \leq G_k[u_0, u_0, v_2,\cdots,
  v_k] G_k[u_1,u_1,v_2, \cdots, v_k]. $$
\end{lemma}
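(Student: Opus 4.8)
The plan is to mimic exactly the proof of Lemma~\ref{lemma:1}, replacing the complex integration-by-parts identity by the real one obtained from Lemma~\ref{lemma:div}. First I would fix the $k$-convex functions $v_2,\dots,v_k$, write $A^{(m)}=\RH(v_m)$ for $m=2,\dots,k$, and set $\dS^{ij}_{k-1}=\dS^{ij}_{k-1}\big(\RH(v_2),\dots,\RH(v_k)\big)$, which by Lemma~\ref{lemma:div} is divergence-free in $j$ (and, by the symmetry of the generalized Kronecker delta, is a symmetric tensor in $i,j$). For $t\in\RR$ consider the quadratic polynomial
\begin{equation*}
  q(t) = G_k[u_0+tu_1,\, u_0+tu_1,\, v_2,\dots,v_k].
\end{equation*}
Expanding by multilinearity of $\dS_k$ in its first two slots, $q(t) = G_k[u_1,u_1,v_2,\dots,v_k]\,t^2 + 2\,G_k[u_0,u_1,v_2,\dots,v_k]\,t + G_k[u_0,u_0,v_2,\dots,v_k]$, so the desired inequality is precisely the discriminant condition for $q$.

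Next I would show $q(t)\geq 0$ for all $t$. Writing $w_t=u_0+tu_1$, which is $\cC^2$ and vanishes on $\di D$ since $u_0=u_1=0$ there, the computation preceding the present lemma in the excerpt gives
\begin{equation*}
  q(t) = \int_D (-w_t)\,(w_t)_{ij}\,\dS^{ij}_{k-1} = \int_D (w_t)_i\,(w_t)_j\,\dS^{ij}_{k-1},
\end{equation*}
where the second equality is integration by parts: one moves a $\di/\di x_j$ off of $w_t\,(w_t)_i$, the boundary term drops because $w_t=0$ on $\di D$, and the term where the derivative lands on $\dS^{ij}_{k-1}$ vanishes by Lemma~\ref{lemma:div}. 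The integrand $(w_t)_i (w_t)_j \dS^{ij}_{k-1}$ is nonnegative pointwise because, for $k$-convex $v_2,\dots,v_k$, the matrix $\big(\dS^{ij}_{k-1}(\RH(v_2),\dots,\RH(v_k))\big)_{ij}$ is positive semidefinite — this is the real analogue of the positivity used in Lemma~\ref{lemma:1} (there phrased as $\ii\di w\wedge\dib w\wedge(\text{positive})\geq 0$), and it is the standard G\aa rding-cone fact that the first derivatives $\dS^{ij}_{k-1}$ of $\dS_k$ evaluated on matrices in the $k$-positive cone form a positive matrix. Hence $q(t)\geq 0$ for every $t$.

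Since $q$ is a nonnegative quadratic (or, in the degenerate case where the leading coefficient vanishes, a nonnegative affine function, forcing the linear coefficient to vanish too and making the inequality trivial), its discriminant is $\leq 0$, i.e.
\begin{equation*}
  G_k[u_0,u_1,v_2,\dots,v_k]^2 \leq G_k[u_0,u_0,v_2,\dots,v_k]\,G_k[u_1,u_1,v_2,\dots,v_k],
\end{equation*}
which is the claim. The only point requiring any care — the "main obstacle," though it is mild — is justifying the positive-semidefiniteness of $\big(\dS^{ij}_{k-1}\big)$ on the $k$-convex cone and the vanishing of the boundary term in the integration by parts; both are routine given the hypotheses ($v_2,\dots,v_k$ $k$-convex, $u_0=u_1=0$ on $\di D$) and the divergence identity of Lemma~\ref{lemma:div}. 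Everything else is a verbatim transcription of the argument for Lemma~\ref{lemma:1} into the real setting. With this lemma in hand, Lemma~\ref{lemma:alg} applied to $f=G_k$ (symmetric on $(k$-convex functions vanishing on $\di D)^k$, multilinear, nonnegative) yields Theorem~\ref{thm:realmain}, and combining it with the real analogue of Lemma~\ref{lemma:mk} applied to the second fundamental form hypothesis $S_{k-1}(II)>0$ yields Theorem~\ref{thm:RPC}, exactly as in the complex case.
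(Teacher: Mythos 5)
Your proof is correct and follows essentially the same route as the paper: the paper likewise reduces the lemma to the nonnegativity of the quadratic $q(t)=G_k[u_0+tu_1,u_0+tu_1,v_2,\dots,v_k]$, using the integration-by-parts identity $G_k[u,u,v_2,\dots,v_k]=\int_D u_i u_j \dS^{ij}_{k-1}\geq 0$ supplied by Lemma~\ref{lemma:div} and the positivity of $\bigl(\dS^{ij}_{k-1}\bigr)$ on the $k$-positive cone. No substantive differences.
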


\bibliographystyle{alpha}
\bibliography{poincare}
\end{document}